\documentclass{amsart}
\usepackage{amsthm} 
\usepackage{amsmath}
\usepackage{amssymb}
\usepackage{comment}
\usepackage{graphicx}
\usepackage{enumitem}
\usepackage{tikz}
\usetikzlibrary{arrows}
\definecolor{red}{gray}{0.7}
\usepackage[section]{placeins}
\usepackage{caption} \captionsetup[table]{skip=10pt}

\newtheorem{theorem}{Theorem}[section]

\theoremstyle{definition}
\newtheorem{definition}[theorem]{Definition}

\theoremstyle{remark}
\newtheorem*{remark}{Remark}

\newcommand{\ZZ}{\mathbb{Z}}
\newcommand{\NN}{\mathbb{N}}
\newcommand{\TT}{\mathbb{T}}

\newcommand{\triqqq}{\Delta(4,4,4)}
\newcommand{\tridts}{\Delta(2,3,7)}

\DeclareMathOperator{\pr}{pr} 
\DeclareMathOperator{\dg}{deg} 

\begin{document}

\title[Cone types and spectral radii of hyperbolic tessellations]{Cone Types and Spectral Radius of Hyperbolic Triangle Groups and Hyperbolic Tessellations}

\author{Megan Howarth}
%\email{Megan.Howarth@unige.ch}
\author{Tatiana Nagnibeda}
\address{University of Geneva, Section of Mathematics, Rue du Conseil-Général 7-9, 1205 Geneva, Switzerland.}
\email{Megan.Howarth@unige.ch ; Tatiana.Smirnova-Nagnibeda@unige.ch}
%\email{Megan.Howarth@unige.ch; Tatiana.Smirnova-Nagnibeda@unige.ch}

\thanks{We acknowledge partial support by the Swiss NSF grant 200020-200400. The first author also acknowledges the support of NCCR SwissMAP}

\subjclass[2020]{Primary 05B45; Secondary 20F67, 05C81}

\keywords{Triangle groups, hyperbolic tessellations, cone types, spectral radius}

\maketitle

\vspace{-1em}

\begin{abstract}
  This paper is devoted to the study of tessellations of the hyperbolic plane, especially the ones associated to hyperbolic triangle groups $\Delta(l,m,n)$. We give a full description of the cone types of these graphs and show that their number depends only on the defining parameters of the group. We then use the cone types structure to provide estimates of the spectral radius for the simple random walk on these tessellations, from above and from below.
\end{abstract}

\section{Introduction}

An important characteristic of a random walk on a graph of uniformly bounded degree is its spectral radius, which measures the rate of decay of probabilities of return to the starting point. By the celebrated theorem of Kesten and by its subsequent extensions, the graph is amenable, that is, fails the strong isoperimetric inequality, if and only if the spectral radius of the simple random walk on it is equal to $1$. In the non-amenable case, the exact value of the spectral radius is typically quite difficult to compute and explicit computations have only  been carried out for certain types of  trees and tree-like graphs. The spectral radius of the infinite $d$-regular tree  is equal to $2\sqrt{d-1}/d$. It is an intriguing problem to understand the nature of the spectral radius on infinite graphs which are not trees or quasi-trees.\\ 

Regular \textit{tessellations} of the plane appear in various areas of science since the nineteenth century and lead not only to beautiful pictures but also to deep mathematical results. A $\{k,d\}$-tessellation, with $k$ and $d$ positive integers, is, according to Schlegel, as cited by Coxeter \cite{Coxtessell}, an "infinite collection of regular $k$-gons, $d$ at each vertex, filling the whole plane just once". In order for the tessellation to be \textit{hyperbolic}, the parameters must satisfy the condition that
\begin{equation}
\label{eq:hypcond}
    (k-2)(d-2) > 4.
\end{equation}
Among the extensive literature on the subject, more recent papers relevant to our study include \cite{cannonpreprint, FloydPlotnick, BCS, christoforos, GrigorchukKravaris}. \\
%Outside of geometric group theory, Panagiotis \cite{christoforos} also considered these tessellations in his study of self-avoiding random walks and self-avoiding polygons.\\

In this paper, we concentrate our attention on hyperbolic tessellations which are Cayley graphs of hyperbolic triangle groups. By construction the latter tile the hyperbolic plane in such a way that three polygons meet per vertex and they may have one, two or three different numbers of sides, depending on the group.\\

The notion of \textit{cone types} was introduced by Cannon  in the $80$'s. He used it to compute growth functions of surface groups and some triangle groups \cite{cannonpreprint,CanWag} and ultimately to prove rationality of growth functions for all Gromov hyperbolic groups  \cite{cannon1984combinatorial}. Beside Gromov hyperbolic groups, Coxeter groups with Coxeter generating sets provide another class of Cayley graphs that have finitely many cone types, as was proven by Brink and Howlett \cite{brinkhowlett}.  A finite automaton describing  the structure of cone types is a  valuable combinatorial tool, enabling the study of infinite objects with only finite information. They have been used for example to compute the growth series of groups and graphs with finitely many cone types \cite{cannonpreprint, GriNag, BCS, GrigorchukKravaris}. Another interesting application is to the study of random walks and to the spectral theory \cite{Lyons, nag1, nag2, NagnibedaWoess, BCS, GiMu, Gouezel,  KLW, Pardo}.\\

The main objective of this paper is to describe combinatorially the cone types of hyperbolic triangle groups and subsequently make use of them to estimate the spectral radius of the simple random walk on these groups. This approach has been applied to the surface groups by the second author and by Gou\"ezel in \cite{nag2, Gouezel} to estimate the spectral radius respectively from above and from below. Different methods for estimating the spectral radius on surface groups from above were explored in \cite{BCCH}. Bartholdi and Ceccherini-Silberstein \cite{BCS} analysed cone types of $\{k,d\}$-tessellations with $k \geq 3$ and $d>3$ and suggested a method to estimate the spectral radius from below.  In slightly different but related directions, estimates for the drift of the simple random walk on these graphs were obtained by dynamical methods by Pollicott and Vytnova \cite{pollicott} and connected constants of self-avoiding random walks on them were estimated by Madras and Wu and by Panagiotis \cite{MWu, christoforos}. \\
%Their theory however fails when $d=3$; we are able to fill this gap for tessellations such that $d=3$ and $k\geq 6$, even.\\

%investigate another important asymptotic group property, its spectral radius. This value is interesting to estimate as it gives insight into some group characteristics such as amenability, isoperimetric constant etc. \\

The outline of the paper is the following. We start with Section \ref{sec:prelims} where we describe the framework and give the necessary background on graphs, cone types, hyperbolic tessellations, triangle groups and random walks. The main contribution of Section \ref{sec:conetypes} is Theorem \ref{thm:main}, describing the cone types of hyperbolic triangle groups. %in terms of their defining parameters.
Section \ref{sec:specrad} utilises these cone types to give upper and lower bounds for the spectral radius of the simple random walks on the associated Cayley graphs. We illustrate our method with the groups $\Delta(4,4,4)$ and $\Delta(2,3,7)$. Finally, we summarise our numerical estimates in Section \ref{sec:conclusion} and compare them to the  combinatorial curvature of the corresponding graphs and to the estimates of the drift of the random walk from \cite{pollicott}.

\section{Preliminaries}
\label{sec:prelims}

\subsection{Cone types}
\label{subsec:graphs}

The graphs studied in this work are all assumed to be locally finite, infinite, connected and simple, although the definitions that follow are also applicable in a more general setting. To set the notation, let $\Gamma = (V,E)$ be such a graph, with vertex set $V$, edge set $E$ and fixed base point $x_{0} \in V$. \\

Let $x,y \in V$ and denote by $\lvert x \rvert$ the distance from $x_{0}$ to $x$. We say that $x$ is a \emph{predecessor} of $y$ if they are connected by an edge and $\lvert x \rvert < \lvert y \rvert$. In this case, $y$ is a \emph{successor} of $x$. \\
The \textit{degree} of $x \in V$, $\dg(x)$, is the number of edges emanating from $x$.

\begin{definition} %(\cite{cannon1984combinatorial}, Section $7$ and \cite{tatianathesis}, Chapter $2.1$)
\label{def:cone}
Let $x \in V$. The \emph{(graph) cone at the vertex $x$}, denoted $C(x)$, is the induced subgraph of $\Gamma$ rooted at $x$ such that its vertices form the set
\begin{equation*}
V(C(x)) = \{y \in V \mid x \textrm{ belongs to a geodesic joining } x_{0} \textrm{ to } y \}
\end{equation*}
and its edges are those of $\Gamma$ that join two vertices of $V(C(x))$. \\

\noindent Two vertices $x, y \in V$  have the \emph{same cone type} if their cones are isomorphic as rooted graphs. %More precisely, 
%\begin{equation}
 %  C(x) \sim C(y) \iff \exists \hspace{0.3em} \phi: C(x) \to C(y),
%\end{equation}
 %where $\phi$ is a graph isomorphism that sends the root $x$ to the root $y$.\\
If the number of isomorphism classes of the cones $C(x), x \in V$, is finite, then $\Gamma$ is said to have \emph{finitely many cone types}.
 %and preserves the graph structure, guaranteeing a one-to-one correspondence between the vertices and the edges. 
 
\end{definition}

% Naturally, taking $\Gamma = Cay(G,S)$ one may talk about the cones of a \textit{group}. In the rest of this work, we will concentrate on a derived notion: cone \emph{types}. Intuitively, the cone type of a vertex is the set comprised of vertices that extend it geodesically; the formal definition is as follows.

 \begin{remark}
 Cannon introduced the notion of cone type for Cayley graphs \cite{cannon1984combinatorial}; see also \cite{epstein1992word} where he defines the cone as a set of words in the generators and the isomorphism between the cones has to be realized by the group multiplication. These cone types are sometimes called \textit{word cone types} and in general they define a finer equivalence relation on the set of vertices than the graph cone types that we defined above and that we will use in this paper. 

%We distinguish the \emph{graph} cone from the \emph{word} cone, a notion which was introduced by Epstein et al. \cite{epstein1992word} and has been more widely studied. It only applies to Cayley graphs as it brings the generators into play. 
%A geometry approach to the study of these \textit{word} cones was developed by Parkinson and Yau \cite{ParkYau}.
 \end{remark}
 
It follows from Definition \ref{def:cone} that each vertex belongs to exactly one cone type and the latter determines both the number of its successors and their cone type. These cone types can be encoded in a matrix, defined as follows.
%We fix the convention that the type of the base vertex is different from that of any other vertex, and denote it by $0$.  

\begin{definition}
    The \textit{adjacency matrix} of the set of cone types of $\Gamma$ is a square matrix such that for all cone types $i, j$, one has
\begin{equation}
\label{eq:adjmat}
    M_{i,j} = \lvert \{y \in S(x) \mid y \text{ has cone type } j \} \rvert,
\end{equation}
where the vertex $x$ has cone type $i$ and $S(x) \subset V$ %$= \{y\in G \mid y \tilde x, d(e,y)=d(e,x)+1\}$ 
is the \textit{set of successors} of $x$.
\end{definition}

It follows that $M^{n}$ encodes the number of paths of finite length $n$ between each pair of vertices. If furthermore this matrix is Perron-Frobenius, i.e. if $M^{n}$ has only positive entries for some power $n$, then the set of cone types is said to be \textit{irreducible}. This property is equivalent to the cone of each vertex containing vertices of each type. It may happen that finitely many vertices in the vicinity of the base point $x_{0}$ do not satisfy this property; we discard them. The reduced set of cone types is denoted by $\TT$. \\
%and in what follows, the adjacency matrix will be indexed by $\TT$. \\

%This is a very useful property, as we will see later, possessed by hyperbolic groups or Coxeter groups for example.\\

%The set of graph cone types of a group $G$ with respect to a generating set $S$ is denoted $\mathbb{T}(G,S)$

Another way to describe the adjacency of the cone types is to encode the information into a \textit{directed graph}, with the cone types as vertices and such that two vertices $i,j$ are joined by $M_{i,j}$ oriented edges from $i$ to $j$. Since two vertices of same cone type have same degree, denoted by $d_{i}$, the \textit{number of predecessors of a vertex of type $i$}, $r_{i}$, is well defined:
\begin{equation}
\label{eq:ri}
    r_{i} = d_{i} - \sum_{j} M_{i,j}.
\end{equation}
For each $i$ %\in \TT$ 
such that $r_{i} \neq 1$, we index the corresponding vertex by this number. For an example, see Figure \ref{fig:cta444}. Notice also that some vertices ($0$ and $1$ here) are not reachable from any other state; they correspond to the ones we delete to get the irreducible set of cone types.

\subsection{Tessellations of the hyperbolic plane}

Recall that a \emph{regular tessellation $\{k,d\}$} of the hyperbolic plane  partitions it into congruent $k$-gons, called \textit{tiles}, in such a way that exactly $d$ polygons meet at one vertex or along precisely one edge, with no gaps or overlapping, and satisfying \eqref{eq:hypcond}. \\
% It is moreover a tessellation of the \textit{hyperbolic} plane if 
% \begin{equation}
% \label{eq:hypcond}
%     (k-2)(d-2) > 4.
% \end{equation}
%It will be convenient to consider the Poincar\'e unit disc as our geometric model for the hyperbolic space.\\

%A criterion for a given polygon to generate such a tiling is the existence of a Fuchsian group for which the polygon is its fundamental domain. \\

A family of examples of such tessellations is given by the triangular tessellations, where the tiles are triangles (so, $k=3$) whose angles are $\frac{\pi}{l}$, $\frac{\pi}{m}$ and $\frac{\pi}{n}$. In this case, we see by \eqref{eq:hypcond} that a triangular tessellation exists for each $d \geq 7$. 
% The question of the existence of such tessellations of the hyperbolic plane has been widely studied, namely to establish criteria under which they do exist when given a certain base polygonal tile. When it is a triangle, Magnus indeed proved in \cite{magnus1974noneuclidean} that any triangle group $\trigrp$ yields a covering that fills its respective two-dimensional space (the two-sphere, the Euclidean plane or the hyperbolic plane) with congruent triangles of angles $\frac{\pi}{l}$, $\frac{\pi}{m}$ and $\frac{\pi}{n}$ and without overlapping. As we are interested in the \textit{hyperbolic} case specifically, we choose the Poincaré unit disc as our geometric model for the space and one then constructs such a tessellation %as in \cite{magnus1974noneuclidean} 
% by successively reflecting the base triangle $\Delta$ with respect to the generating reflections $L, M$ and $N$. We fix the convention that a product of motions is to be read from right to left. 
Taking the planar dual tiling of this tessellation yields a tessellation of the hyperbolic plane by polygons, namely $2l$-, $2m$- and $2n$-gons. Its $1$-skeleton is the \textit{Cayley graph} of the corresponding triangle group $\Delta(l, m, n)$; see for instance Figure \ref{fig:444dual} where $\Delta(4,4,4)$ is illustrated. In fact, it has been shown by Magnus \cite{magnus1974noneuclidean} that such a triangle is indeed the fundamental domain of the group $\Delta(l,m,n)$ and that the symmetry group of this tessellation is generated by reflections in the sides of a triangle as above \cite{Coxtessell}. 
%Moreover, this graphs contains only cycles of even length, making it \textit{bipartite}. \\

%\FloatBarrier
\begin{figure} %[h !]
    \centering
    \includegraphics[width=0.4\columnwidth]{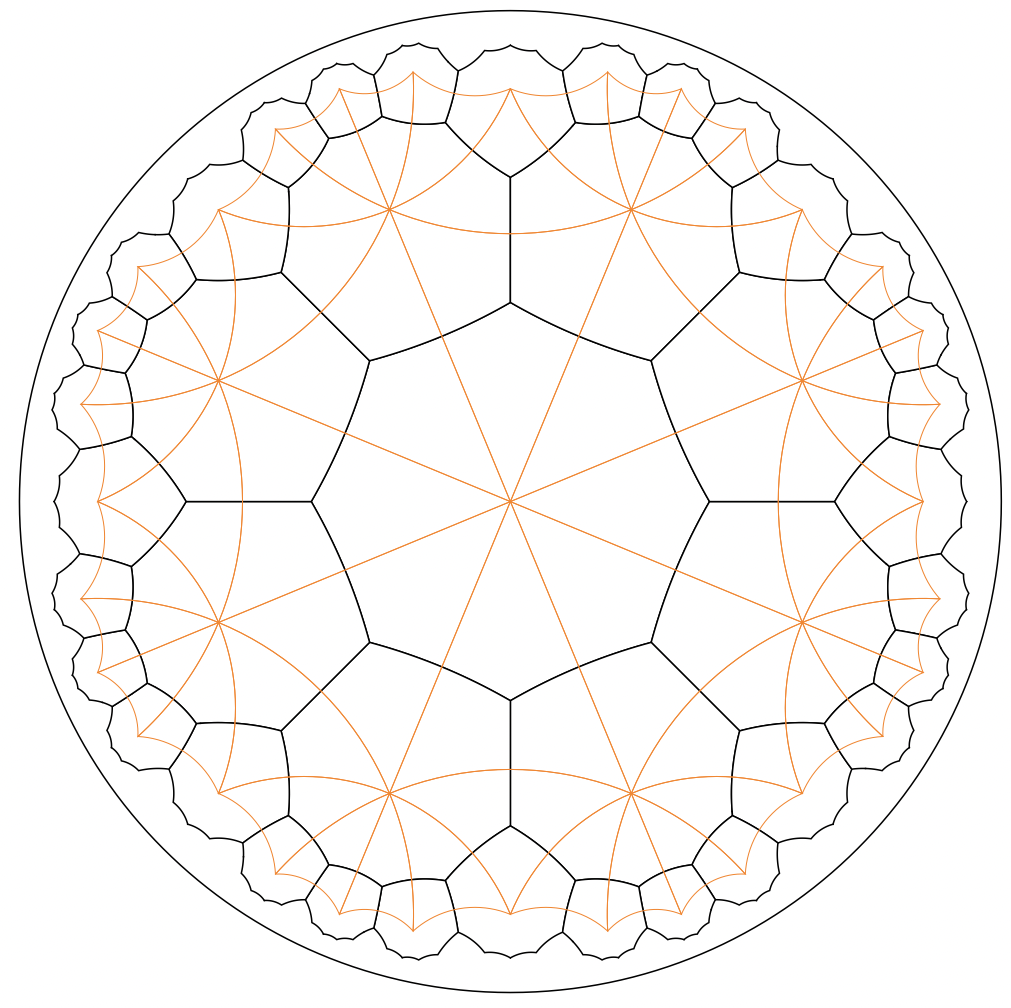}
    \caption{The two dual tessellations associated to $\Delta(4,4,4)$, \{3,8\} and \{8,3\}.}
    \label{fig:444dual}
\end{figure}

%\subsection{Triangle Groups}

% def with l,m,n
\begin{definition} 
Let $l, m, n \in \ZZ_{\geq 2}$ and denote by $\Delta$ a triangle defined by the angles $\frac{\pi}{l}, \frac{\pi}{m}$ and $\frac{\pi}{n}$. Embed $\Delta$ into the appropriate space according to the nature of the sum of its angles $\star = \frac{\pi}{l} + \frac{\pi}{m} + \frac{\pi}{n}$: the sphere (if $ \star > \pi$), the Euclidean plane (if $ \star = \pi$) or the hyperbolic plane (if $ \star < \pi$). Let $\overline{L}, \overline{M}, \overline{N}$ be the sides of $\Delta$, situated respectively opposite the angles $\frac{\pi}{l}, \frac{\pi}{m}, \frac{\pi}{n}$ and let $L, M, N$ be the reflections with respect to the sides $\overline{L}, \overline{M}, \overline{N}$, respectively. The \emph{triangle group $\Delta(l, m, n)$} is the group generated by the three reflections $L, M, N$.
%Note that different orders of the parameters $l, m$ and $n$ give way to isomorphic groups, for which the triangle $\Delta$ is called the \emph{base triangle}.
\end{definition}

% Any triangle group can be classified into one of three cases - \textit{spherical}, \textit{Euclidean} or \textit{hyperbolic} - determined by the nature of the result of the sum of its base triangle's angles. In particular, the hyperbolic case is defined by $\frac{\pi}{l} + \frac{\pi}{m} + \frac{\pi}{n} < \pi$.\\

%\begin{enumerate}
    %\item if $\delta > \pi$, then it is \emph{spherical};
    %\item if $\delta = \pi$, then it is \emph{Euclidean};
    %\item if $\delta < \pi$, then it is \emph{hyperbolic}.
%\end{enumerate}

%Magnus \cite{magnus1974noneuclidean} demonstrated on a case-by-case basis 
It can be shown that any triangle group $\Delta(l, m, n)$ admits the following presentation
\begin{equation}
\label{eq:prestri}
\Delta(l, m, n) = \langle L, M, N \mid L^{2} = M^{2} = N^{2} = (LM)^{n} = (MN)^{l} =(NL)^{m} = e \rangle
\end{equation} 
and moreover belongs to the class of Coxeter groups of finite rank, which we know to have finitely many cone types.

%For some theory on Coxeter groups, refer for example to \cite{humphreys}, Part $II$. \\

\subsection{Random Walks and Spectral Radius}

Let $\Gamma = (V,E)$ be a graph as in Subsection \ref{subsec:graphs}. Starting from a fixed base point $x_{0} \in V$, we successively take a step from a vertex $x \in V$ along one of the incident edges, towards one of its neighbouring vertices $y \in V$; this is the \emph{nearest neighbour random walk} (NNRW). In the particular case when each edge is chosen with equal probabilities $\frac{1}{\dg(x)}$, it is called the \emph{simple random walk} (SRW) on $\Gamma$. These probabilities can be encoded in a stochastic transition matrix 
\begin{equation*}
P = (p(x,y))_{x,y \in V}, \textrm{ where } p(x,y) \geq 0 \textrm{ and } \sum_{w \in V}p(x,w)=1 \textrm{ } \forall x \in V.
\end{equation*}
More generally we can also consider $p^{(n)}(x,y)$, the probability of reaching the vertex $y$ from the vertex $x$ \textit{in $n$ steps}, which is encoded in the $n$-th power of the transition matrix, $P^{(n)}$. The generating function of the transition probabilities is called the \emph{Green kernel}:
\begin{equation*}
\label{eq:green}
G(x,y \mid z) := \sum_{n = 0}^{\infty} p^{(n)}(x,y)z^{n},
\end{equation*}
where $x,y \in V$ and $z \in \mathbb{C}$. %By evaluating $G(x,y \mid z)$ in $z \in \mathbb{C}$ we can introduce the \emph{convergence} (or \emph{divergence}) of the Green kernel, a notion which is independent of the points $x$ and $y$ in which we consider it when $\Gamma$ is connected.\\
We are interested in determining its \emph{radius of convergence}, denoted by $R_{Gk}$, which is independent of $x,y$ when $\Gamma$ is connected and hence it is enough to study that of $G(x_{0}, x_{0} \mid z)$. We already know that $R_{Gk} \geq 1$ since for all $ \lvert z \rvert < 1$, 
\begin{equation*}
\label{eq:rgsmol1}
\sum_{n = 0}^{\infty} p^{(n)}(x_{0},x_{0})z^{n} \leq \sum_{n = 0}^{\infty} z^{n} < \infty, %\textrm{, as } p^{(n)}(x_{0},x_{0}) \leq 1.
\end{equation*}
%Note that as the random walk is irreducible (def: $p^{n}(x,y) > 0 for all n$)
as $p^{(n)}(x_{0},x_{0}) \leq 1$. The inverse of $R_{Gk}$ is called the \textit{spectral radius} of the random walk 
\begin{equation}
\label{eq:spectralradius}
\rho = \frac{1}{R_{Gk}} = \underset{n \to \infty}{\textrm{lim sup}} \sqrt[n]{p^{(n)}(x_{0},x_{0})}.
\end{equation}
%These two aforementioned numbers are related by
%\begin{equation}
%\label{eq:radiuscvg}
%R_{Gk} = \frac{1}{\underset{n \rightarrow \infty}{\textrm{lim sup}} \sqrt[n]{p^{(n)}(x_{0},x_{0})}} = \frac{1}{\rho},
%\end{equation}
In $d$-regular graphs,  $\rho$ is indeed the spectral radius of $P$ viewed as a bounded self-adjoint operator on the Hilbert space $l^{2}(V)$.\\

% It is also known \cite{tatianathesis} that the spectral radius coincides with the norm of the \emph{Markov operator M}, $\rho = \norm{M}$. It is a self-adjoint operator on the space $l^{2}(V)$, given by
% \begin{equation}
% \label{eq:markov}
%     M \xi(x) = \frac{1}{d(x)} \sum_{y \sim x} \xi(y),
% \end{equation}
% where the space $l^{2}(V)$ is endowed with the inner product defined by
% \begin{equation}
% \label{eq:innerprod}
%    \langle \phi, \psi \rangle = \sum_{x \in V} \phi(x) \psi(x) d(x),
% \end{equation}
% where $d(x)$ denotes the degree of a vertex $x$.\\

Similarly to the Green kernel, we may consider the generating function of $q^{(n)}(x,y)$, the probability of reaching $y \in V$ from $x \in V$ \emph{for the first time after exactly $n$ steps}, given by
\begin{equation*}
\label{eq:bigf}
F(x,y \mid z) = \sum_{n=1}^{\infty} q^{(n)}(x,y)z^{n}.
\end{equation*} 
%It is called the \emph{generating function of the random walk} when considered at $x=y=x_{0}$. 
By definition, $F(x,x \mid 0) = 0$ for $x \in V$ and an important property of the function $F(x,x \mid z)$ is that it is analytic (thus continuous) in its disc of convergence. It is naturally related with the Green kernel within its disc of convergence by the formula
\begin{equation} \label{eq:grandf}
G(x,x \mid z) = \frac{1}{1-F(x,x \mid z)}.
\end{equation}
Since for every $x,y \in V$ and $n \geq 0$, $q^{(n)}(x,y) \leq p^{(n)}(x,y)$, the opposite inequality holds for the radii of convergence of their respective generating functions, $R_{F} \geq R_{Gk}$. More precisely, we deduce from above that %their respective radii of convergence 
they are related by \cite{nag2}
\begin{equation}
\label{eq:rg}
R_{Gk} = \begin{cases} R_{F} & \textrm{ if } F(x,x \mid R_{F}) \leq 1;\\
z_{0} & \textrm{ otherwise, }
\end{cases}
\end{equation}
where $z_{0} \in \mathbb{R}_{+}^{*}$ is the unique value for which $F(x,x \mid z_{0})=1$.

\section{Cone Types of Hyperbolic Triangle Groups}
\label{sec:conetypes}

%Brink and Howlett \cite{brinkhowlett} proved that Coxeter groups have finitely many cone types by combinatorics on words. Parkinson and Yau \cite{ParkYau} later studied (\textit{word}) cone types by geometric methods and exemplified them using the case of $\Delta(3,3,4)$. 
We will exploit the symmetries of the hyperbolic tessellations to describe their cone types. Depending on the parameters $l,m$ and $n$, there are different cases to consider; so
%It is a known result that triangle groups $\Delta(l,m,n)$, being Coxeter groups, have finitely many cone types \cite{brinkhowlett}. \\
for an accurate analysis, we partition the hyperbolic triangle groups into three families: when all three parameters are equal, $l=m=n$; when only two of them coincide; and when all three are different.

\begin{theorem}
\label{thm:main}
The number of cone types of a hyperbolic triangle group $\Delta(l,m,n)$ with standard presentation \eqref{eq:prestri} is as follows:
    \begin{enumerate}[label=(\roman*)]
        \item $\Delta(n,n,n)$ has $n+2$ cone types;
        \item[(ii.$1$)] $\Delta(l,n,n)$ has $l+2n+1$ cone types if $ l\geq 3 $;
        \item[(ii.$2$)] $\Delta(2,n,n)$ has $2n+5$ cone types;
        \item[(iii.$1$)] $\Delta(l,m,n)$ has $2(l+m+n)-2$ cone types if $ l\geq 3 $;
        \item[(iii.$2$)] $\Delta(2,m,n)$ has $2m+2n+7$ cone types if $m \geq 4$;
        \item[(iii.$3$)] $\Delta(2,3,n)$ has $2n+21$ cone types,
    \end{enumerate}
where $l$, $m$ and $n$ are distinct integers $\geq 2$, satisfying $\frac{1}{l} + \frac{1}{m} + \frac{1}{n} < 1$.
%they are not equal to infinity
\end{theorem}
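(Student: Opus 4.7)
The plan is a case-by-case analysis following the paper's stratification by coincidences among $(l,m,n)$ and by small parameter values, unified by a common encoding of cone types from the underlying tessellation geometry. The key structural observation is that the Cayley graph of $\Delta(l,m,n)$ is the $1$-skeleton of a hyperbolic tessellation in which three polygons of sizes $2l, 2m, 2n$ meet at every vertex; hence every vertex has degree $3$. For a vertex $v$ at distance $|v|$ from $x_{0}$, any geodesic from $x_{0}$ enters each of the three polygons incident to $v$ at a distinguished vertex, and I would encode the cone type of $v$ by the tuple consisting of the number of predecessors of $v$ together with, for each of the three polygons at $v$, the position of $v$ along that polygon relative to the entry vertex. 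Since each polygon has bounded perimeter, only finitely many such tuples occur, recovering finiteness of the cone type set (which is in any case already guaranteed by Brink--Howlett for Coxeter groups of finite rank).

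I would then enumerate cone types inductively: starting from the type of $x_{0}$, repeatedly expand the list by computing the successors of each known type together with their own local tuples. This populates the adjacency matrix \eqref{eq:adjmat}. Minimisation requires two verifications: to merge two candidate types I would produce a rooted-graph isomorphism of their infinite cones, built recursively layer-by-layer from a matching of successor profiles; to separate two candidate types I would exhibit a finite-level discrepancy in their successor tuples. Once this is done, the exact count is obtained by inspection of the resulting list.

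For case (i), $\Delta(n,n,n)$, the threefold symmetry among the identical polygons collapses many a priori types to yield $n+2$ classes: one per interior position $1,\dots,n$ along the currently active polygon, plus two boundary/root classes near $x_{0}$. In case (ii), the residual $\mathbb{Z}/2$ symmetry between the two equal polygons produces the intermediate counts $l+2n+1$ and $2n+5$. In the generic case (iii.$1$) no symmetries remain, and the count $2(l+m+n)-2$ arises from two types per interior position of each of the three polygons --- the factor $2$ reflecting the two possible rotational orientations in which a geodesic proceeds around a polygon --- minus corrections at the extreme positions. The exceptional subcases (ii.$2$), (iii.$2$), (iii.$3$) correspond to $l=2$ or $m=3$: a parameter equal to $2$ makes the corresponding $2$-gon degenerate, encoding the commutation of two reflections, which forces certain vertices to gain extra predecessors and introduces additional cone types concentrated near the ``seams'' of these digons; the hexagonal case $m=3$ introduces analogous but milder local configurations.

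I expect the main obstacle to be the minimality verification in the exceptional subcases. The generic pattern is driven cleanly by symmetry, but when small polygons degenerate, subtle local interactions near the root and along the degenerate seams create ad hoc types whose exact count is not obvious from the generic formula; one must also check that no hidden isomorphism collapses two of these ad hoc types into one. To handle this, I would first draw the cone-type automaton explicitly for a representative small example in each exceptional family (in the spirit of Figure~\ref{fig:cta444} for $\Delta(4,4,4)$), verify the count directly there, and then argue that the same local mechanism extends parametrically to all values of $n$ in the relevant subcase, with the inductive step controlled by the bounded size of the interacting polygons.
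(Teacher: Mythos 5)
Your proposal follows essentially the same route as the paper: a case-by-case enumeration of cone types driven by the local structure of the three faces ($2l$-, $2m$- and $2n$-gons) meeting at each vertex, exploiting the symmetries among equal faces in cases (i) and (ii), organising the count around the bases and summits of the polygons and the successors of summits, and isolating the extra types in the $l=2$ and $m=3$ subcases coming from vertices that play two such roles at once --- and the paper's own proof is no more detailed than your sketch in those exceptional subcases. One minor slip: when $l=2$ the corresponding face is a $2l=4$-gon (a square), not a degenerate digon, though this does not affect your mechanism.
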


\begin{proof}

 We proceed\footnote{We could follow the geometric approach of \cite{ParkYau} as was done by the first author in \cite{meganmaster}, but it is better suited to describe the word cone types whereas for us it is enough to consider the graph cone types.} by analysing case by case. % but when considering \textit{graph} cone types, a combinatorial analysis may instead be carried out on these graphs.} to describe the cone types directly, using a combinatorial analysis of the corresponding graph.

\begin{enumerate}[label=(\roman*)]
\item This first family with one defining parameter corresponds to the triangle groups of the form $\Delta(n,n,n)$, % $n \geq 4$,
yielding a \textit{regular} $\{2n,3\}$-tessellation of the hyperbolic plane. Since the graph is bipartite, no two adjacent vertices have the same norm. Consequently, each $2n$-gon has a vertex of minimal norm (its \textit{base}) and one of maximal norm (its \textit{summit}). The summit being the only vertex that is not also the base vertex of any $2n$-gon, it is the only vertex with two predecessors and one successor. This yields one cone type for the identity, $n-1$ cone types placed symmetrically around the $2n$-gon, one cone type for the summit of the $2n$-gon and finally one cone type for its unique successor. Their corresponding adjacency matrix is
\begin{equation*}
% write 8x8 matrix with dots.
%M = 
\begin{pmatrix}
    0 & 3 & 0 & 0 & \dots & 0 & 0 & 0 \\
    0 & 0 & 2 & 0 & \dots & 0 & 0 & 0 \\
    0 & 0 & 1 & 1 & \dots & 0 & 0 & 0 \\
    0 & 0 & 1 & 0 & \dots & 0 & 0 & 0 \\
    \vdots &  &  &  & \ddots &  &  & \vdots \\
    0 & 0 & 1 & 0 & \dots & 0 & 1 & 0 \\
    0 & 0 & 0 & 0 & \dots & 0 & 0 & 1 \\
    0 & 0 & 0 & 2 & \dots & 0 & 0 & 0 \\   
\end{pmatrix}.
\end{equation*}

% \begin{itemize}
%     \item the identity element is of type $0$ and has no predecessors and $3$ successors, all three of type $1$.
%     \item the vertices of type $1$ have one predecessor and two successors, both of type $2$.
%     \item the vertices of type $i$, for $i=2, \ldots, n-1$, have one predecessor and 2 successors, one of type $2$ and one of type $i+1$.
%     \item the vertices of type $n$ have two predecessors and one successor, of type $n+1$.
%     \item the vertices of type $n+1$ have one predecessors and two successors, both of type $3$.\\
% \end{itemize}

\vspace{1em}

\item The triangle groups $\Delta(l,n,n)$ defined by two parameters tile the hyperbolic plane with two sorts of polygons, $2l$-gons and $2n$-gons, with one $2l$-gon and two $2n$-gons meeting at each vertex. To accurately describe the cone types in this situation, we must distinguish two subcases: $l = 2$ and $l \geq 3$.\\

\textbf{Case 1: $\Delta(l,n,n), l \geq 3$.}
%To ensure hyperbolicity, we must impose the following bounds: these groups are $\Delta(l, 3, 3)$ with $l \geq 4$, $\Delta(3, n, n)$ with $n \geq 4$ and $\Delta(l, n, n)$ with $l \neq n$, $l, n \geq 4$.\\
We study the graph inductively and start by looking at the three polygons (one $2l$-gon and two $2n$-gon) around the identity, which has its own unique cone type. It is clear that the cone types around the $2l$-gon, except that of its summit, should come in pairs, placed symmetrically around the polygon; this yields $l$ new cone types. The cone types around the $2n$-gons are however not symmetrical, since half of the edges separate a $2l$-gon and a $2n$-gon and the other half separate two $2n$-gons. We thus have $2n-2$ new cone types (as the identity and the one at the intersection of the $2l$-gon and $2n$-gon have already been counted). Searching further for new cone types to arise, we see that only vertices that are the successors of the summits of each of the two kinds of polygons add a new type each. Hence in total we get $1 + l + 2n$ cone types and we deduce the corresponding adjacency matrix in the same way as above. \\

%To determine these cone types, notice that a $2l$-gon may be bordered only by $2n$-gons, leading to a labelling of the cone types that is symmetric on either side of the $2l$-gon. On the other hand, the edges of a $2n$-gon border both $2n$-gons and $2l$-gons in alternation. This deprives it of any symmetry and all cone types around the $2n$-gon must be different.

\textbf{Case 2: $\Delta(2,n,n)$.} 
%To satisfy the hyperbolic relation, we take $n \geq 5$.\\
The parameter $l$ being equal to $2$ introduces two extra cone types that do not arise in Case (ii.$1$), which are due to the vertices that are both the summit of a $4$-gon and the successor of a summit of a $2n$-gon.\\

%namely involving the summits of some $4$-gons and their predecessors, leading to the two extra cone types that do not arise in Case $1$. \\

\item This third family corresponds to the tessellations of the hyperbolic plane associated to the groups $\Delta(l,m,n)$ defined by three different parameters, hence using three different polygons: $2l$-gons, $2m$-gons and $2n$-gons; we partition them into three subcases. \\ %($l \neq m \neq n$). \\

\textbf{Case 1: $\Delta(l,m,n), l \geq 3$.}
%The remaining groups, $\Delta(3, m, n)$ with $m\neq n, m,n \geq 4$ and $\Delta(l, m, n)$ with $l \neq m \neq n, l,m,n \geq 4$, are considered here.\\
This case can be treated analogously to Case (ii.$1$) above, except that here we get $2l$ cone types (counting that of the identity) instead of $l$ coming from the $2l$-gon due to the loss of symmetry. Similarly, the $2n$- and $2m$-gons give rise to $2n-2$ and $2m-3$ cone types respectively, taking care not to count any twice. The only new cone types remaining are again the successors of the summits of the three polygons, thus yielding $2(l+m+n)-2$ cone types, which may naturally be described by their adjacency matrix. \\

\textbf{Case 2: $\Delta(2,m,n), m \geq 4$.}
We use the same method than for Case (ii.$2$) and find that the parameter $l=2$ yields the same particularities.\\

%The groups that fall under this subcase are $\Delta(2, 4, n)$ with $n \geq 5$ and $\Delta(2,m,n)$ with $m \neq n, m,n \geq 5$.\\

\textbf{Case 3: $\Delta(2,3,n)$.} This is the most technical case, due to the fact that $l=2$ and $\lvert l-m \rvert = 1$. It is treated analogously to the previous one but yields an even higher number of cone types, induced by the fact that many vertices are simultaneously the summit of a polygon and the successor of the summit of another.
%$\Delta(2, 3, n)$ with $n \geq 7$

\end{enumerate}

\end{proof}

\paragraph{Example} The Cayley graph of $\Delta(4,4,4)$ has $6$ cone types, which are described by the \textit{adjacency matrix} 
\begin{equation*}
    \begin{pmatrix}
        0 & 3 & 0 & 0 & 0 & 0 \\
        0 & 0 & 2 & 0 & 0 & 0 \\
        0 & 0 & \textbf{1} & \textbf{1} & \textbf{0} & \textbf{0} \\
        0 & 0 & \textbf{1} & \textbf{0} & \textbf{1} & \textbf{0} \\
        0 & 0 & \textbf{0} & \textbf{0} & \textbf{0} & \textbf{1} \\
        0 & 0 & \textbf{0} & \textbf{2} & \textbf{0} & \textbf{0}
    \end{pmatrix},
\end{equation*}
where the emboldened part corresponds to the reduced set of cone types, and by the \textit{associated digraph}, shown in Figure \ref{fig:cta444}. In fact, further work, as was done by the first author in \cite{meganmaster}, leads to the geometric representation of the cone types of such groups; see Figure \ref{fig:conetypes444}. %These cone types are illustrated in the Poincar\'e unit disc in Figure \ref{fig:conetypes444}. 

%\FloatBarrier
\begin{figure}
    \centering
    \resizebox{8cm}{!}{
    \begin{tikzpicture}[->,>=stealth',auto,node distance=3cm,
  thick]

\node[red, style={circle, draw=red, thick, inner sep=4pt}] at (0,0)(0){0};
\node[red] at (0.4,-0.4)(index0){0};
\node[red, style={circle, draw=red, thick, inner sep=4pt}] at (2,0)(1){1};
\node[style={circle, draw=black, thick, inner sep=4pt}] at (4,0)(2){2};
\node[style={circle, draw=black, thick, inner sep=4pt}] at (6,0)(3){3};
\node[style={circle, draw=black, thick, inner sep=4pt}] at (8,0)(4){4};
\node at (8.4,-0.3)(index4){2};
\node[style={circle, draw=black, thick, inner sep=4pt}] at (10,0)(5){5};

\path[every node/.style={font=\sffamily\small}]

    (0) edge[red, bend left] node [right] {} (1)
    (0) edge[red] node [right] {} (1)
    (0) edge[red, bend right] node [right] {} (1)

    (1) edge[red, bend left] node [right] {} (2)
    (1) edge[red, bend right] node [right] {} (2)
    
    (2) edge[bend right] node [right] {} (3)
    (2) edge [loop above] node {} (2) 

    (3) edge[bend right] node [right] {} (2)
    (3) edge node [right] {} (4)
    
    (4) edge node [right] {} (5)

    (5) edge[bend right] node [right] {} (3)
    (5) edge[bend left] node [right] {} (3);

\end{tikzpicture}}
    \caption{The digraph associated to the set of cone types of the group $\Delta(4,4,4)$. The reduced set $\TT$ of cone types contains only the emboldened vertices.}
    \label{fig:cta444}
\end{figure}
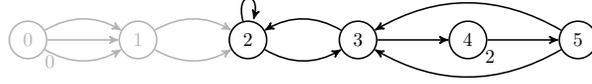

%\FloatBarrier
\begin{figure}
    \centering
    \resizebox{8cm}{!}{
    \includegraphics{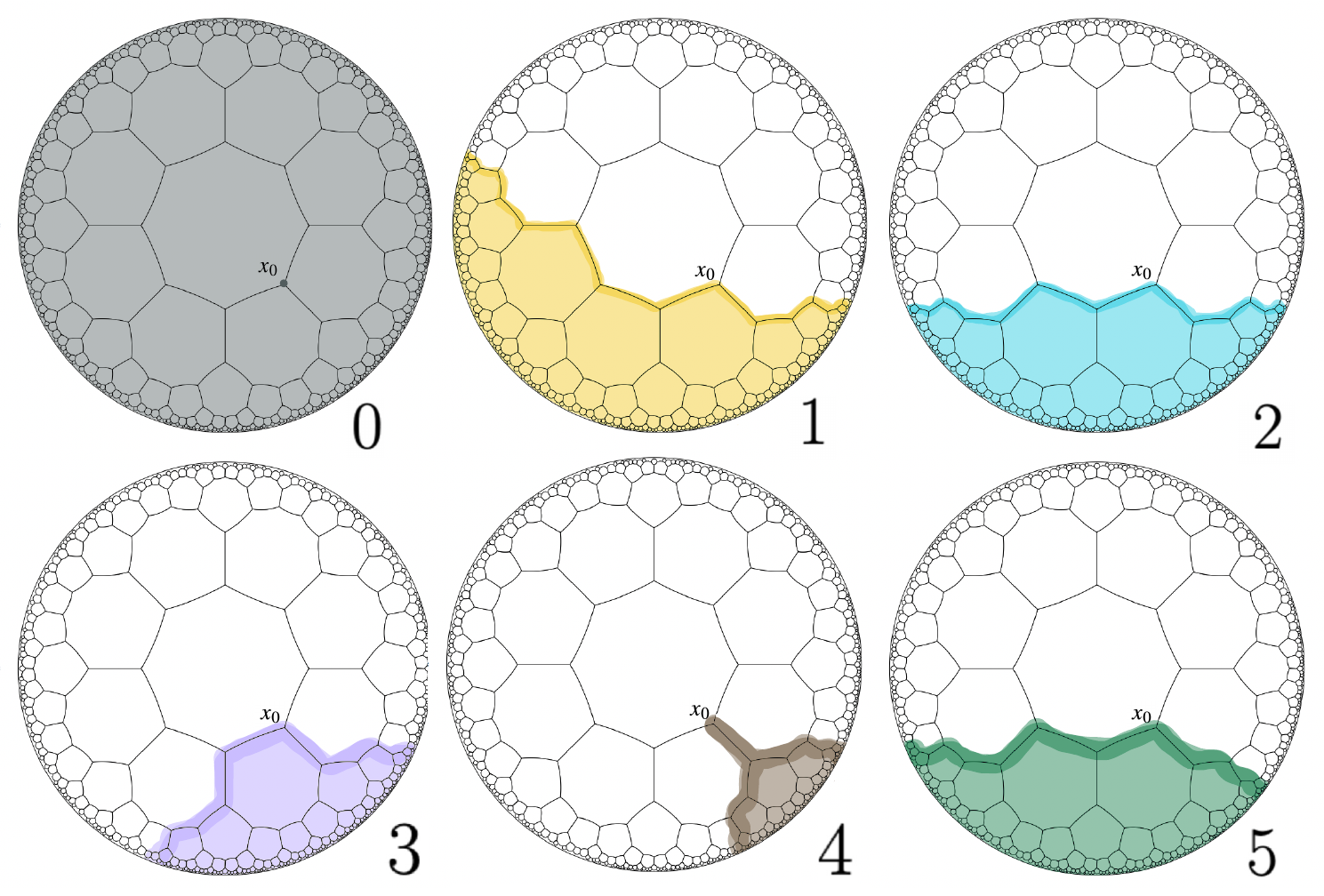}}
    \caption{The $6$ cone types of $\Delta(4,4,4)$.}
    \label{fig:conetypes444}
\end{figure}

\section{Estimating the Spectral Radius}
\label{sec:specrad}

The two methods we use to bound the spectral radius on a tessellation from above and from below are inspired by  the work on surface groups, respectively by the second author \cite{nag2} and by Gou\"ezel \cite{Gouezel}. We presently implement their methods to our considered tessellations of the hyperbolic plane associated to hyperbolic triangle groups.

\subsection{Upper Bound}

The method presented here was introduced in \cite{nag2} and can in principle apply in the context of locally finite graphs that have finitely many cone types. 
%say $K+1$. %$\TT = \{0, \ldots, K\}$. 
The idea is to reduce the study of the SRW on such a graph $\Gamma$ to that of a NNRW on the \textit{tree of geodesics}  associated to $\Gamma$, whose structure we will see is compatible with the cone types of $\Gamma$. The spectral radius in such trees is computable \cite{NagnibedaWoess}.

%The idea is to consider a specific NNRW that is adapted to the cone structure and constructed on a particular tree, the \textit{tree of geodesics}, in such a way that its spectral radius is computable exactly, and bounds the group's one by above. (rewrite + \cite{NagnibedaWoess})

\begin{definition} \cite{nag2} The \emph{tree of geodesics} $T_{\Gamma}$ of $\Gamma$ has the set of vertices $V(T_{\Gamma})$  in one-to-one correspondence with the set of geodesic segments in $\Gamma$  of the shape $\{ [x_{0},x]_{x \in V} \}$, and two such vertices $\gamma_{1}, \gamma_{2} \in V(T_{\Gamma})$ are connected by an edge if one of the corresponding geodesic segments in $\Gamma$ is a one-step extension of the other. Its base vertex $\gamma_{0}$ corresponds to the empty geodesic segment $[x_{0},x_{0}]$ in $\Gamma$.
\end{definition}
% include this? This bound coming from the study of a specific random walk on a certain tree implies that it is dependent on the geometric structure of the group itself rather than on the chosen type function or cone types.
By construction, there is a projection from $T_{\Gamma}$ onto $\Gamma$, given by
%\begin{equation*}
%\label{eq:treeprojection}
%\begin{split}
%\theta : V(T_{\Gamma}) \to V\\
$\theta: [x_{0},x] \mapsto x$.
%\end{split}
%\end{equation*}
Since $\theta(C(\gamma)) = C(\theta(\gamma))$ for every $\gamma \in V(T_{\Gamma})$, one can derive a set of cone type for the vertices of $T_{\Gamma}$ from the one defined on the vertices on $\Gamma$, showing in particular that both graphs $\Gamma$ and $T_{\Gamma}$ have the same number of cone types. In what follows, suppose that $\Gamma$ has $K+1$ cone types with respect to the base point $x_0$. \\ 
%\TT = \{0, \ldots, K\}$\\
%Introducing the type function $\tau(\gamma):= t_{0}(\theta(\gamma))$ on the vertices of $T_{\Gamma}$, one can show that $\tau$ corresponds exactly to the cone type labelling for $T_{\Gamma}$, which moreover shows that there are the same number of cone types in both graphs, $\Gamma$ and $T_{\Gamma}$. \\

Let us consider the NNRW on this tree defined as follows. Recall that in a tree rooted at $x_{0}$, each vertex $x \in V \setminus \{x_{0}\}$ has exactly one predecessor, denoted $\pr(x)$, and any two vertices are joined by a unique path. Furthermore, we impose compatibility with the cone types so that $p(x,\pr(x))$ depends only on the cone type of $x \in V$ and $p(x,y)$ depends solely on the cone types of $x \in V$ and any successor $y \in S(x)$. This way, we may denote the above probabilities respectively by $p_{-i}$ for a vertex $x$ of type $i \geq 1$ and by $p_{i,j}$ for vertices $x$ of type $i$ and $y \in S(x)$ of type $j$. Using these notations, the transition probabilities of the NNRW $P_{T}$ are
\begin{equation}
\label{eq:treeprobas}
\begin{cases} p_{i,j} = \frac{1}{d_{i}} \textrm{, for } i=0, \ldots, K; j= 1, \ldots, K;\\
p_{-i} = \frac{r_{i}}{d_{i}} \textrm{, for } i= 1, \ldots, K,
\end{cases}
\end{equation}
where $d_{i}$ and $r_{i}$, defined in \eqref{eq:ri}, are taken relatively to the original graph $\Gamma$. The following equalities then hold:
\begin{equation*}
\label{eq:probasct}
\begin{cases} 
p_{-i} + \sum\limits_{j=1}^{K} M_{i,j}p_{i,j} = 1 \textrm{, for } i=1,\ldots,K; \\
\sum\limits_{j=1}^{K} M_{0,j}p_{0,j} = 1,
\end{cases} 
\end{equation*}
where $M_{i,j}$ are the entries of the adjacency matrix of the cone types.\\
% not of reduced set \TT as M{0,j} wouldn't be in it.
%the number of successors of type $j$ of a vertex of type $i$.\\

We now state the following theorem from \cite{nag2}, without proof.
\begin{theorem} %(\cite{tatianathesis}, Chapter $3.3$)
Let $\Gamma$ be a graph as above, that is also bipartite. Denote by $\rho$ the spectral radius of the simple random walk on $\Gamma$ and by $\rho_{T}$ the spectral radius of the nearest neighbour random walk $P_{T}$ on its tree of geodesics. Then,
\begin{equation*}
    \rho \leq \rho_{T}.
\end{equation*}
\end{theorem}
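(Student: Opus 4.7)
The plan is to compare the first-return generating functions $F_\Gamma(x_0, x_0 \mid z)$ and $F_T(\gamma_0, \gamma_0 \mid z)$ on $\Gamma$ and $T_\Gamma$ respectively, and then to deduce the spectral radius inequality via the relation \eqref{eq:rg}. The bipartiteness of $\Gamma$ is essential: every step of the SRW, and by construction of $P_T$, moves from a vertex either to a successor or to a predecessor, changing the norm by exactly $1$. The first thing I would do is define a lifting map $\phi$ from walks on $\Gamma$ to walks on $T_\Gamma$. Given a walk $w = (x_0, x_1, \dots, x_n)$, set $\phi(w) = (\gamma_0, \tilde x_1, \dots, \tilde x_n)$ inductively: if $x_{k+1}$ is a successor of $x_k$, let $\tilde x_{k+1}$ be the tree-successor of $\tilde x_k$ obtained by appending the edge $(x_k, x_{k+1})$ to the current geodesic; if $x_{k+1}$ is a predecessor of $x_k$, let $\tilde x_{k+1} = \pr(\tilde x_k)$. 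Since $\lvert \tilde x_k \rvert = \lvert x_k \rvert$ along the lift, a first-return walk on $\Gamma$ lifts to a first-return walk on $T_\Gamma$.

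Next, I would reduce the comparison of $F_\Gamma$ and $F_T$ to a comparison of cone-type first-passage functions. For each cone type $i \neq 0$, let $\ell_i(z)$ (resp.\ $L_i(z)$) be the generating function for the first visit to the set $\{y : \lvert y \rvert = \lvert x \rvert - 1\}$ from a vertex of cone type $i$ in $\Gamma$ (resp.\ in $T_\Gamma$). By bipartiteness,
\begin{equation*}
F_\Gamma(x_0, x_0 \mid z) = \sum_{j} \frac{M_{0,j}}{d_0}\, z\, \ell_j(z), \qquad F_T(\gamma_0, \gamma_0 \mid z) = \sum_{j} \frac{M_{0,j}}{d_0}\, z\, L_j(z),
\end{equation*}
and the tree structure yields the clean recursion
\begin{equation*}
L_i(z) = \frac{r_i}{d_i}\, z + \frac{z}{d_i} \sum_{j} M_{i,j}\, L_j(z)\, L_i(z).
\end{equation*}
It therefore suffices to prove the componentwise bound $\ell_i(z) \leq L_i(z)$ for real $z \geq 0$ in the common domain of convergence.

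The componentwise bound is the main obstacle. The NNRW's single down-probability $p_{-i} = r_i/d_i$ aggregates the SRW's $r_i$ predecessor choices of probability $1/d_i$ each, and the tree lift of a first-passage walk in $\Gamma$ is itself a first-passage walk in $T_\Gamma$. Summing SRW probabilities over graph walks sharing a common tree lift, and exploiting the cone-type compatibility $\theta(C(\tilde x)) = C(\theta(\tilde x))$, I would derive $\ell_i \leq L_i$. The subtlety is that after a down step in $\Gamma$ the walk may continue from a predecessor distinct from $\theta(\pr(\tilde x_k))$ and carrying a different cone type, so the recursion satisfied by $\ell_i$ is not the clean quadratic polynomial satisfied by $L_i$ but couples all cone types through the predecessor-type distribution; one must then dominate this inhomogeneous system pointwise by the tree's homogeneous one via a monotone fixed-point argument. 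Once $\ell_i \leq L_i$ is established, the expressions above yield $F_\Gamma(z) \leq F_T(z)$; since both are power series with nonnegative coefficients, $R_F^\Gamma \geq R_F^T$, and a short case analysis applying \eqref{eq:rg} (depending on whether $F_T(R_F^T) \leq 1$) gives $R_{Gk}^\Gamma \geq R_{Gk}^T$, hence $\rho \leq \rho_T$.
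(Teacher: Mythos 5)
First, a point of reference: the paper does not prove this statement at all --- it is quoted verbatim from \cite{nag2} ``without proof'' --- so your proposal can only be judged on its own terms, and on those terms it has a genuine gap at its central step. The endgame is fine (the deduction $F_\Gamma \leq F_T$ on the positive axis $\Rightarrow$ $R_{Gk}^\Gamma \geq R_{Gk}^T$ via Pringsheim and the two cases of \eqref{eq:rg} is routine), and the overall strategy of comparing first-passage functions level by level using bipartiteness is the right one. But the componentwise bound $\ell_i(z) \leq L_i(z)$, which you correctly identify as ``the main obstacle,'' is not an obstacle to be deferred --- it \emph{is} the theorem --- and your sketch contains two specific defects that prevent the announced ``monotone fixed-point argument'' from being a routine completion.

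The first defect is that the lifting map $\phi$ is not well defined. After a down-step from $x_k$ to a predecessor $x_{k+1} \neq \theta(\pr(\tilde x_k))$, the tree vertex $\pr(\tilde x_k)$ is the geodesic $[x_0,\theta(\pr(\tilde x_k))]$, which does not end at $x_{k+1}$; the next up-step of the walk appends an edge emanating from $x_{k+1}$, and there is no tree-successor of $\pr(\tilde x_k)$ corresponding to it. So the inductive definition of $\phi$ breaks exactly at the step whose analysis carries all the difficulty. The second defect is that $\ell_i(z)$ is not a function of the cone type $i$. A first-passage path from $x$ to the sphere of radius $\lvert x\rvert -1$ stays at levels $\geq \lvert x\rvert$ until its last step, but it need not stay in $C(x)$: from a successor $y$ of $x$ the walk may descend to a different predecessor $x'$ of $y$, with $\lvert x'\rvert=\lvert x\rvert$ and $x'\notin C(x)$, and continue in $C(x')$. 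Consequently the quantity you call $\ell_i$ depends on the embedding of $x$ in the whole graph, the ``recursion satisfied by $\ell_i$'' is not even a closed system indexed by cone types, and the object you propose to dominate by the tree's homogeneous system is not well defined. Any correct proof must confront precisely this point --- e.g.\ by working with first-passage functions to individual predecessors and proving a system of \emph{inequalities} that the tree system saturates, or by an operator/path-counting comparison --- and that argument is absent here. As written, the proposal is a plausible plan whose hard step is asserted rather than proved.
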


Similarly to the transition probabilities $p(x, \pr(x))$ above, one can show that $q^{(n)}(x,\pr(x))$ depends only on the cone type of $x \in V \setminus \{ x_{0} \}$ for any $n \in \NN$, so we can unambiguously denote by $q^{(n)}_{-i}$ the probability of returning to $x_{0}$ for the first time after $n$ steps, starting from a vertex of type $i$. The corresponding generating function is then given by, for a vertex $x$ of type $i=0,\ldots,K$,
\begin{equation*}
F_{-i}(z) := F(x,\pr(x) \mid z) = \sum_{n=0}^{\infty} q^{(n)}_{-i} z^{n}.
\end{equation*}
These power series satisfy \cite{nag2} the following system of polynomial equations for $i=1,\ldots,K$:
\begin{equation} 
\label{eq:sys}
F_{-i}(z) = p_{-i}z + z \sum_{j=1}^{K} M_{i,j}p_{i,j}F_{-i}(z)F_{-j}(z).
\end{equation}
This means that for each $i=1, \ldots,K$, there exist polynomials $\mathcal{P}_{i}(w,z)$ such that 
\begin{equation}
\label{eq:polyF}
\mathcal{P}_{i}(F_{-i}(z),z) = 0.
\end{equation}
These functions $F_{-i}(z)$ are thus solutions of a polynomial functional equation and it follows that each one can be continued to an algebraic function for which its radius of convergence $R_{F_{-i}}$ is a singularity. So, for each index $i \in \{1, \ldots, K\}$, denote by 
 \begin{equation*}
    \mathcal{P}_{i}(w,z)= a_{0}^{(i)}(z)w^{n} + a_{1}^{(i)}(z)w^{n-1} + \ldots + a_{n}^{(i)}(z)
\end{equation*}
the corresponding polynomial for which \eqref{eq:polyF} holds. In practice it is obtained by performing the elimination of all variables bar one, say $w_{j}$, from the system of equations \eqref{eq:sys}. As the radius of convergence of the function $F_{-j}(z)$ is a (real and positive) singular point, it is known \cite{Ahlfors} that it is found either amongst the (real positive) roots of the polynomial $a_{0}^{(j)}(z)$, or amongst the (real positive) roots of the discriminant of $\mathcal{P}_{j}(F_{-j}(z),z)$. Moreover, when the set of cone types is irreducible, which we can always assume to be the case in the class of graphs that we consider, via a relabelling of the cone types,  the radii of convergence $R_{F_{-i}}$ all coincide for $i \in \{1, \ldots, K \}$. Finally, we deduce the radius of convergence of the Green kernel, first using the fact that the generating function  $F(x_{0}, x_{0} \mid z)$ can be expressed in terms of these functions, by
\begin{equation}
\label{eq:bigfminusf}
    F(x_{0}, x_{0} \mid z) = \sum_{j=1}^{K} M_{0,j}p_{0,j}zF_{-j}(z),
\end{equation}
and then applying the relation \eqref{eq:rg}. We conclude by taking its inverse to get the spectral radius as in \eqref{eq:spectralradius}.\\

\paragraph{Examples}
We illustrate this method with two hyperbolic triangle groups, $\Delta(4, 4, 4)$ and $\Delta(2, 3, 7)$. We choose these two examples as the first illustrates the simpler case of a regular tiling and the latter is the \textit{extreme} hyperbolic case, in the sense that it maximizes the sum $\frac{1}{l}+\frac{1}{m}+\frac{1}{n} < 1$. The first step is to use the description of the cone types from Section \ref{sec:conetypes} to construct their trees of geodesics, both depicted in Figure \ref{fig:geodtrees}. We then reduce the sets of cone types, before determining the transition probabilities for the NNRW on the tree of geodesics following equation \eqref{eq:treeprobas}. Given this data, we follow the steps explained above to compute their spectral radius.\\

%Note that for clarity, the functions referring to the NNRW on the tree carry a subscript $T$, to distinguish them from those relating to the random walk on the Cayley graph.\\
% \begin{enumerate}\addtocounter{enumi}{-1}
%     \item the identity element is of type $0$ and has three successors, all three of type $1$;
%     \item vertices of type $1$ have two successors, both of type $2$;
%     \item vertices of type $2$ have two successors, one of type $2$ and one of type $3$;
%     \item vertices of type $3$ have two successors, one of type $2$ and one of type $4$;
%     \item vertices of type $4$ have one successor, of type $5$;
%     \item vertices of type $5$ have two successors, both of type $3$.
% \end{enumerate}

 \begin{figure}[!tbp]
  \centering
  \begin{minipage}[b]{0.35\textwidth}
    \resizebox{15em}{18em}{%

\begin{tikzpicture}[every node/.style={circle, draw=black, thick, inner sep=4pt}]
\node at (0,0)(B){0};
\node at (-1,1)(A1){1};
\node at (1,1)(A2){1};
\node at (0,-1)(C1){1};
\node at (-1,-2)(D1){2};
\node at (1,-2)(D2){2};
\node at (2,-1)(C2){3};
\node at (-2,-3)(E1){3};
\node at (0,-3)(E2){2};
\node at (2,-3)(E3){2};
\node at (-3,-4)(F1){4};
\node at (-1,-4)(F2){2};
\node at (-3,-5)(G){5};
\node at (-4,-6)(H1){3};
\node at (-2,-6)(H2){3};

\draw[thick, black] (A1)--(B)--(A2);
\draw[thick, black] (B)--(C1)--(D1)--(E1)--(F1)--(G)--(H1);
\draw[thick, black] (D2)--(C1);
\draw[thick, black] (D2)--(C2);
\draw[thick, black] (D1)--(E2);
\draw[thick, black] (D2)--(E3);
\draw[thick, black] (G)--(H2);
\draw[thick, black] (E1)--(F2);
\end{tikzpicture}
}
    %\includegraphics[width=0.75\textwidth]{media/TreeGeods444_vert.pdf}
    %\caption{Part of the tree of geodesics of $\triqqq$.}
    %\label{fig:geostreeqqq}
     \end{minipage}
  \hfill
  \begin{minipage}[b]{0.55\textwidth}
    \resizebox{22em}{27em}{%
 
\begin{tikzpicture}[every node/.style={circle, draw=black, thick, inner sep=4pt}]

\node at (1,8)(Z1){21};
\node at (-1,8)(Z2){31};

\node at (4,7)(A1){20};
\node at (2,7)(A2){23};
\node at (0,7)(A3){30};

\node at (3,6)(B1){22};
\node at (1,6)(B2){11};
\node at (-1,6)(B3){20};

\node at (-5,5)(C1){19};
\node at (-3,5)(C2){6};
\node at (4,5)(C3){21};
\node at (2,5)(C4){21};

\node at (-4,4)(D1){26};
\node at (-2,4)(D2){4};
\node at (1,4)(D3){20};
\node at (3,4)(D4){24};

\node at (-3,3)(E1){25};
\node at (2,3)(E2){8};
\node at (4,3)(E3){25};
\node at (-1,3)(E4){4};

\node at (-2,2)(F1){9};
\node at (-1,2)(F2){27};
\node at (1,2)(F3){6};

\node at (-3,1)(G1){6};
\node at (0,1)(G2){1};
\node at (3,1)(G3){6};

\node at (-2,0)(H1){6};
\node at (0,0)(H2){0};
\node at (2,0)(H3){4};

\node at (-1,-1)(I1){2};
\node at (1,-1)(I2){3};
\node at (3,-1)(I3){20};

\node at (-2,-2)(J1){28};
\node at (1,-2)(J2){5};
\node at (-4,-2)(J3){7};

\node at (-3,-3)(K1){29};
\node at (-1,-3)(K2){20};
\node at (0,-3)(K3){7};
\node at (2,-3)(K4){6};

\node at (-4,-4)(L1){21};
\node at (-1,-4)(L2){10};
\node at (1,-4)(L3){4};
\node at (-9,-4)(L5){20};
\node at (-7,-4)(L6){14};
\node at (4,-4)(L7){25};

\node at (-2,-5)(M1){6};
\node at (0,-5)(M2){11};
\node at (-8,-5)(M3){34};
\node at (-6,-5)(M4){13};
\node at (3,-5)(M5){33};

\node at (0,-6)(N1){12};
\node at (-1,-6)(N2){13};
\node at (-3,-6)(N3){15};
\node at (-7,-6)(N4){19};
\node at (-5,-6)(N5){21};
\node at (2,-6)(N6){32};
\node at (4,-6)(N7){16};

\node at (-2,-7)(O1){14};
\node at (-4,-7)(O2){16};
\node at (-6,-7)(O3){18};
\node at (1,-7)(O4){31};
\node at (3,-7)(O5){15};

\node at (-5,-8)(P1){17};
\node at (-6,-9)(P2){20};

\draw[thick, black] (I1)--(H2)--(G2);
\draw[thick, black] (I1)--(H1);
\draw[thick, black] (H2)--(I2)--(H3)--(I3);
\draw[thick, black] (I1)--(J1);
\draw[thick, black] (I2)--(J2);
\draw[thick, black] (H3)--(G3);
\draw[thick, black] (J3)--(K1)--(J1)--(K2);
\draw[thick, black] (K3)--(J2)--(K4);
\draw[thick, black] (K1)--(L1);
\draw[thick, black] (L2)--(K3)--(L3);
\draw[thick, black] (M1)--(L2)--(M2)--(N1)--(N2)--(O1)--(N3)--(O2)--(P1)--(O3)--(N4)--(M3)--(L5);
\draw[thick, black] (P1)--(P2);
\draw[thick, black] (O3)--(N5);
\draw[thick, black] (N4)--(M4);
\draw[thick, black] (M3)--(L6);
\draw[thick, black] (N1)--(O4)--(N6)--(M5)--(N7);
\draw[thick, black] (N6)--(O5);
\draw[thick, black] (M5)--(L7);
\draw[thick, black] (F3)--(G2)--(F2)--(F1)--(E1)--(D1)--(C1);
\draw[thick, black] (F1)--(G1);
\draw[thick, black] (E1)--(D2);
\draw[thick, black] (D1)--(C2);
\draw[thick, black] (G2)--(F3)--(E2)--(D3)--(C4)--(B1)--(A2)--(B2)--(A3)--(B3);
\draw[thick, black] (E2)--(D4)--(E3);
\draw[thick, black] (D4)--(C3);
\draw[thick, black] (A2)--(Z1);
\draw[thick, black] (A3)--(Z2);
\draw[thick, black] (B1)--(A1);
\draw[thick, black] (F2)--(E4);

\end{tikzpicture}
}
    %\includegraphics[width=\textwidth]{media/TreeGeods237.pdf}
    %\caption{Part of the tree of geodesics of $\tridts$.}
    %\label{fig:geostreedts}
  \end{minipage}
    \caption{Parts of the trees of geodesics of $\triqqq$ (left) and $\tridts$ (right).}
    \label{fig:geodtrees}
\end{figure}

Let us give the detailed computations for $\triqqq$. As we consider the reduced set of cone types $\TT= \{2,3,4,5\}$, thus having in particular deleted type $0$, we must pick a new starting point $\tilde{\gamma_{0}}$ in such a way that its successors' cone types take value in the reduced set. We conveniently choose it to have cone type $4$, so the generating function  becomes
 \begin{equation}
 \label{eq:bigfqqq}
 F(\tilde{\gamma_{0}},\tilde{\gamma_{0}} \mid z) \overset{\eqref{eq:bigfminusf}}{=} 
 %\sum_{j=2}^{5} s_{4,j}p_{4,j}zF^{T}_{-j}(z) = 
 \frac{1}{3}zF_{-5}(z),
 \end{equation}
from which we deduce by \eqref{eq:grandf} the expression of the Green kernel in its disc of convergence. Then, the functions $F_{-i}(z)$ for $i \in \{2, 3, 4, 5\}$ satisfy the recursive relations of \eqref{eq:sys}, which together lead to the system of equations (using the notation $F_{-i}(z) = w_{i}$ for clarity)
\begin{equation*}
    \begin{cases}
    z + z w_{2}^{2} + z w_{2} w_{3} - 3w_{2} = 0\\
    z + z w_{2} w_{3} + z w_{3} w_{4} - 3w_{3} = 0\\
    2z + z w_{4} w_{5} - 3w_{4} = 0\\
    z + 2z w_{3} w_{5} - 3w_{5} = 0.\\
    \end{cases}
\end{equation*}
Using the program CoCoA5 \cite{CoCoA}, we perform the elimination of the variables $w_{2}, w_{3}$ and $w_{4}$ as only $w_{5}$ appears in \eqref{eq:bigfqqq}. It results that $w_{5}$ satisfies the polynomial equation
\begin{equation*}
\begin{split}
    \mathcal{P}_{5}(w_{5},z) & := 729 w_{5}^3 - 729 w_{5}^2 z - 486 w_{5}^4 z + 243 w_{5} z^2 - 324 w_{5}^3 z^2 + 
 \mathbf{81 w_{5}^5 z^2} \\ 
 & - 27 z^3 + 324 w_{5}^2 z^3  + 297 w_{5}^4 z^3 - 72 w_{5} z^4 + 117 w_{5}^3 z^4 \mathbf{- 36 w_{5}^5 z^4} \\
 & + 6 z^5 - 69 w_{5}^2 z^5 - 84 w_{5}^4 z^5 + 6 w_{5} z^6 + 16 w_{5}^3 z^6 + \mathbf{8 w_{5}^5 z^6} = 0,
\end{split}
\end{equation*}
of which we study the real positive roots of $a_{0}^{(5)}(z)$ (emboldened) and those of the discriminant of $\mathcal{P}_{5}(w_{5},z)$. There are several numerical candidates for $R_{F_{-5}}$, that all come from the roots of the discriminant of $\mathcal{P}_{5}(w_{5},z)$, so we use Mathematica to visualise locally the real parts of the solutions to $\mathcal{P}_{5}(w_{5},z) = 0$. From the picture we determine which branch corresponds to $F_{-5}(z)$, by comparing each one against the known properties of the $F_{-j}(z)$  functions. We conclude that its radius of convergence is equal to
\begin{equation*}
    R_{F_{-5}} = 1.0321531591.
\end{equation*}
As this is also the radius of convergence $R_{F}$ of $F(\tilde{\gamma_{0}}, \tilde{\gamma_{0}} \mid z)$, %by Proposition \ref{prop:equalradii}, 
it remains to deduce that of the Green kernel. We verify that $F(\tilde{\gamma_{0}}, \tilde{\gamma_{0}} \mid R_{F}) < 1$, so by \eqref{eq:rg} we have $R_{F_{-5}} = R_{F} = R_{Gk}$ and thus finally, \eqref{eq:spectralradius} implies
\begin{equation*}
    \rho_{T_{\Delta(4,4,4)}} = \frac{1}{R_{Gk}} = 0.9688484613. %2586382.
\end{equation*}
We proceed in the same way for the other group $\Delta(2,3,7)$ and get the following result:
\begin{equation*}
    \rho_{T_{\Delta(2,3,7)}} = 0.9460344380.
\end{equation*}

\subsection{Lower Bound}

As every group on $d$ generators is a quotient of the free group with the same number of generators, there is a trivial lower bound to its spectral radius, %it is bound by below by the spectral radius of the random walk on the $d$-regular tree,
%\begin{equation*}
    $\rho \geq \frac{2\sqrt{d - 1}}{d}.$
%\end{equation*}
In the case of hyperbolic triangle groups, $d=3$, hence
%\begin{equation*}
    $\rho \geq 0.942809$.
%\end{equation*}
To improve this lower bound estimate of the spectral radius, we use the method developed by Gou\"ezel in \cite{Gouezel} that can in principle apply to 
%It also makes use of cone types, relying on their finiteness and irreducibility. 
graphs with finitely many cone types. \\
%\footnote{Note that this estimate may also be obtained by considering a NNRW in a specific space, emulating Nagnibeda's method \cite{nag2}, see \cite{Gouezel}.} 

Let $s_{n}(i)$ be the number of vertices of cone type $i$ on the $n$-sphere, then it is clear that it satisfies the following recurrence formula
\begin{equation*}
    s_{n+1}(i) = \sum_{j} \frac{1}{r_{i}} M_{j,i} s_{n}(j),
\end{equation*}
where $M_{j,i}$ are the entries of the adjacency matrix \eqref{eq:adjmat} of the set of cone types.
This suggests to consider the matrix $\tilde{M}$, defined by 
\begin{equation*}
    \tilde{M}_{i,j} = \frac{1}{r_{i}}M_{j,i},
\end{equation*}
so that 
%\begin{equation*}
    $s_{n+1} = \tilde{M} s_{n}$.
%\end{equation*}
% As this matrix is Perron-Frobenius, let $\nu > 0$ be its Perron-Frobenius eigenvalue and $A = (A_{1}, \ldots, A_{K})$ be a corresponding positive eigenvector.
Define also the matrices 
\begin{equation*}
\label{eq:mprime}
    M'=D^{-\frac{1}{2}}M^{T}D^{\frac{1}{2}},
\end{equation*}
where $D$ is the diagonal matrix with entries $A_{i}$, a positive Perron-Frobenius eigenvector of $\tilde{M}$, and 
\begin{equation*}
\label{eq:Mprimeprime}
    M''=\frac{M'+M'^{T}}{2}.
\end{equation*}

We now state without proof the following theorem from \cite{Gouezel}.

\begin{theorem}
\label{thm:algou}
Let $\Gamma$ be a graph as above, that is also bipartite and regular of degree d. Let $\TT = \{1, \ldots, K\}$ be its irreducible set of cone types and $M$ the adjacency matrix of $\TT$.\\
% Define a new matrix $\tilde{M}$ by 
% \begin{equation*}
%     \tilde{M}_{i,j} = \frac{1}{r_{i}}M_{j,i}.
% \end{equation*}
% Let $\nu > 0$ be its Perron-Frobenius eigenvalue and $A = (A_{1}, \ldots, A_{K})$ be a corresponding positive eigenvector. Define the matrices
% \begin{equation*}
% \label{eq:mprime}
%     M'=D^{-\frac{1}{2}}M^{T}D^{\frac{1}{2}},
% \end{equation*}
% where $D$ is the diagonal matrix with entries $A_{i}$, and 
% \begin{equation*}
% \label{eq:Mprimeprime}
%     M''=\frac{M'+M'^{T}}{2},
% \end{equation*}
% with Perron-Frobenius eigenvalue $\lambda > 0$.
% %the maximal eigenvalue of $M''$.
Then, %using the notations introduced above,
\begin{equation}
\label{eq:gou}
    \rho \geq \frac{2 \lambda}{d \sqrt{\nu}},
\end{equation}
where $\nu > 0$ and $\lambda > 0$ are the Perron-Frobenius eigenvalues of $\tilde{M}$ and $M^{\prime \prime}$, respectively.

\end{theorem}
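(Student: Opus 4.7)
The approach I propose is the standard variational lower bound for the spectral radius of a self-adjoint operator, with a clever choice of test functions compatible with the cone-type structure. Since $\Gamma$ is $d$-regular and bipartite, the transition operator $P$ is self-adjoint on $\ell^{2}(V)$ and its spectrum is symmetric about $0$; therefore $\rho = \|P\|_{\mathrm{op}} \geq \langle P f, f \rangle / \|f\|^{2}$ whenever $f \in \ell^{2}(V)$ satisfies $\langle P f, f \rangle \geq 0$, in particular for any $f \geq 0$. The plan is to construct a family $(f_{N})_{N \geq 1}$ of nonnegative finitely supported test functions, depending only on cone type and sphere, and to compute the asymptotics of the resulting Rayleigh quotient.

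Let $\phi = (\phi_{i})_{i \in \TT}$ denote the Perron--Frobenius eigenvector of $M''$ with eigenvalue $\lambda$, and $A = (A_{i})_{i \in \TT}$ the Perron--Frobenius eigenvector of $\tilde{M}$ with eigenvalue $\nu$. I would set
\begin{equation*}
f_{N}(x) = \frac{\phi_{t(x)}}{\sqrt{s_{|x|}(t(x))}} \, \mathbf{1}\bigl[1 \leq |x| \leq N\bigr],
\end{equation*}
which is nonnegative and in $\ell^{2}(V)$. A direct count using $\sum_{x \in S_{n},\, t(x) = i} 1 = s_{n}(i)$ gives $\|f_{N}\|^{2} = N\|\phi\|^{2}$. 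Next, because each vertex of cone type $i$ has exactly $M_{i,j}$ successors of type $j$, one has
\begin{equation*}
\langle P f_{N}, f_{N} \rangle = \frac{2}{d} \sum_{n=1}^{N-1} \sum_{i,j \in \TT} M_{i,j} \sqrt{\frac{s_{n}(i)}{s_{n+1}(j)}} \, \phi_{i} \phi_{j}.
\end{equation*}

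Because $\TT$ is irreducible, Perron--Frobenius theory applied to $\tilde{M}$ yields $s_{n}(i) \sim c\, A_{i}\, \nu^{n}$ (in Cesàro average if the period is not $1$), so $\sqrt{s_{n}(i)/s_{n+1}(j)} \to \nu^{-1/2} \sqrt{A_{i}/A_{j}}$. The key identification is then
\begin{equation*}
\sum_{i,j} M_{i,j} \sqrt{A_{i}/A_{j}} \, \phi_{i} \phi_{j} = \phi^{T} (M'^{T}) \phi = \phi^{T} M' \phi = \phi^{T} M'' \phi = \lambda \|\phi\|^{2},
\end{equation*}
where the first equality follows from the definition $M' = D^{-1/2} M^{T} D^{1/2}$, the second is automatic because the scalar $\phi^{T} M' \phi$ equals its transpose, and the third uses $M'' = (M' + M'^{T})/2$ and $M'' \phi = \lambda \phi$. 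Combining the two asymptotics gives $\langle P f_{N}, f_{N} \rangle / \|f_{N}\|^{2} \to 2\lambda/(d\sqrt{\nu})$, which yields \eqref{eq:gou} upon passing to $N \to \infty$.

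The main technical obstacle is controlling the error terms in the asymptotic $s_{n}(i) \sim A_{i} \nu^{n}$ uniformly enough to conclude the limit of the Rayleigh quotient. If $\tilde{M}$ fails to be aperiodic (a real possibility in the bipartite setting), the pointwise convergence must be replaced by convergence along residue classes modulo the period $p$; this can be handled by restricting the support of $f_{N}$ to sphere indices in a single residue class, with only an $O(1)$ loss in the count, which is absorbed when dividing by $N$. A secondary, minor, obstacle is to make sure that edge effects near the boundary levels $n=1$ and $n=N$ of the support do not spoil the $N \to \infty$ limit, but they contribute only $O(1)$ to both numerator and denominator and so wash out.
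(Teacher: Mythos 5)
The paper states this theorem without proof, citing Gou\"ezel; your variational argument (test functions depending only on the level $|x|$ and the cone type, normalized by $\sqrt{s_{|x|}(t(x))}$) is essentially the proof from that reference, and it is consistent with the paper's own remark identifying the bound with the spectral radius of a walk on $\ZZ \times \TT$. Your computation of the Rayleigh quotient, the identification $\sum_{i,j} M_{i,j}\sqrt{A_i/A_j}\,\phi_i\phi_j = \phi^{T}M''\phi = \lambda\|\phi\|^{2}$, and the treatment of boundary terms are all correct. One side remark is wrong, though harmless here: if $\tilde{M}$ were periodic, restricting the support of $f_N$ to a single residue class of levels modulo the period $p\geq 2$ would make $\langle Pf_N,f_N\rangle=0$, since all edges join consecutive levels; fortunately the paper's notion of ``irreducible'' set of cone types requires $M^{n}>0$ for some $n$, i.e.\ primitivity, so $\tilde{M}$ is aperiodic and $\nu^{-n}s_n$ genuinely converges, making that contingency unnecessary.
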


\begin{remark}
    As shown in \cite{Gouezel}, this lower bound corresponds to the spectral radius of a random walk on the space $\ZZ \times \TT$, which is, using the notations introduced above, %of Theorem \ref{thm:algou},
    defined by the transition probabilities
    \begin{equation*}
        p\left((n,i), (n+1,j)\right) =\frac{M_{i,j}}{d} \hspace{1em};\hspace{1em} p\left((n,i), (n-1,j)\right) = e^{-\nu}\frac{A_{j}M_{j,i}}{A_{i} d}.
    \end{equation*}
\end{remark}

%\begin{remark}
 %   Gou\"ezel also introduces in \cite{Gouezel} a broader notion of \textit{type system} to further improve the lower bound. Indeed, the above algorithm depends on the set of cone types, so one could hope to improve the estimates on the spectral radius by considering a different type system in such a way that it distinguishes a greater number of elements, inevitably compromising between size and computability.
%\end{remark}

\paragraph{Examples} We again illustrate this algorithm with the groups  $\Delta(4, 4, 4)$ and $\Delta(2, 3, 7)$. The description of the cone types given in Section \ref{sec:conetypes} gives all the data required to apply Theorem \ref{thm:algou}. \\

Taking the reduced set of cone types of $\triqqq$; the corresponding first matrices are 

\begin{equation*}
\label{eq:algoumatrixqqq}
M = 
\begin{pmatrix}
1 & 1 & 0 & 0\\
1 & 0 & 1 & 0\\
0 & 0 & 0 & 1\\
0 & 2 & 0 & 0\\
\end{pmatrix} 
%\hspace{1em}
\text{ and }
\hspace{0.3em}
\tilde{M} = 
\begin{pmatrix}
1 & 1 & 0 & 0\\
1 & 0 & 0 & 2\\
0 & 1/2 & 0 & 0\\
0 & 0 & 1 & 0\\
\end{pmatrix}.
\end{equation*}
We then derive from them the other three matrices that appear in the algorithm. Proceeding in the same way for the group $\tridts$, we obtain larger matrices, of size $24$. We use Python for all the computations of the maximal eigenvalues of the matrices $\tilde{M}$ and $M''$ and also to apply the final formula \eqref{eq:gou}. The numerical results are summarized in Table \ref{tab:spectralestimates}, in the next Section \ref{sec:conclusion}.

\section{Numerical Estimates for Some Examples}
\label{sec:conclusion}

There is a natural notion of \textit{combinatorial curvature} for a graph $\Gamma = (V,E)$ that embeds in the (hyperbolic) plane, in such a way that it is the $1$-skeleton of some (hyperbolic) tessellation. It is a function on the vertices, calculated for $v \in V$ via the formula \cite{BauesPeyerimhoff}
\begin{equation*}
    \kappa(v) = 2 \pi \left( 1 - \frac{\dg(v)}{2} + \sum_{f : v \in f} \frac{1}{E_{f}} \right),
\end{equation*}
where $E_{f}$ denotes the number of edges bounding the polygonal tile $f$. In the particular case when $\Gamma$ is the Cayley graph of a hyperbolic triangle group $\Delta(l,m,n)$, the value of this curvature is negative and constant on the vertices:
\begin{equation}
\label{eq:combcurvtri}
    \kappa(v) = -\pi \left( 1- \left( \frac{1}{l}+\frac{1}{m}+\frac{1}{n} \right) \right), \hspace{0.3em} \forall v \in V.
\end{equation}

% \vspace{1em}
We summarise in Table \ref{tab:spectralestimates} our upper and lower bounds for the spectral radius for $10$ examples of hyperbolic triangle groups. They are positioned in decreasing order of their combinatorial curvature \eqref{eq:combcurvtri}. We also include the estimates for the surface group of genus $2$ \cite{nag2, Gouezel}, as the comparison of the results is interesting. \\
%Indeed, we notice that all the presented triangle groups have a higher spectral radius and curvature than $G_{2}$.\\

A similar table can be found in \cite{pollicott}, where Pollicott and Vytnova compute upper and lower bounds for the \textit{drift} of the random walk on some hyperbolic triangle groups. We notice that the general tendency is for the drift to increase and the spectral radius to decrease, as the combinatorial curvature decreases.

% All estimates are rounded to 10 decimals.
%Tabke sorted in order of largest estimate to smallest.

\begin{table}
    \centering
\begin{tabular}{ c | c | c | c}
 \textbf{Group} & \textbf{Lower Bound} & \textbf{Upper Bound} & \textbf{Combinatorial curvature} \\ 
 \hline
$\Delta(2,3,7)$ & $0.997\textbf{4}952153$ & $0.997\textbf{9}155005$ & $-\pi/42$ \\
$\Delta(2,4,5)$ & $0.99\textbf{3}8397191$ & $0.99\textbf{4}7303685$ & $-\pi/20$ \\ 
$\Delta(3,3,4)$  & $0.98\textbf{8}1065017$ & $0.98\textbf{9}6253048$ & $-\pi/12$ \\
$\Delta(2,5,5)$ & $0.98\textbf{8}3635961$ & $0.98\textbf{9}2337907$ & $-\pi/10$ \\
$\Delta(2,6,6)$ & $0.98\textbf{2}5162138$ & $0.98\textbf{3}5349956$ & $-\pi/6$ \\
$\Delta(3,4,4)$  & $0.97\textbf{7}4836673$ & $0.97\textbf{8}9017112$ & $-\pi/6$ \\ 
$\Delta(3,4,5)$ & $0.97\textbf{2}4491846$ & $0.97\textbf{3}6926635$ & $-13\pi/60$ \\
$\Delta(4,4,4)$ & $0.96\textbf{7}6175845$ & $0.96\textbf{8}8484613$ & $-\pi/4$ \\ 
$\Delta(3,5,7)$ & $0.96\textbf{4}2297084$ & $0.96\textbf{5}1708503$ & $-34\pi/105$ \\
$\Delta(7,7,7)$ & $0.94\textbf{5}5418401$ & $0.94\textbf{6}0344380$ & $-4\pi/7$ \\
 
\hline
$G_{2}$ & $0.662\textbf{4}77$ & $0.662\textbf{8}153757$ & $-4\pi$ \\

\end{tabular}
   \caption{Numerical estimates of the spectral radius of some hyperbolic triangle groups and the surface group of genus $2$.}
    \label{tab:spectralestimates}
\end{table}
%\FloatBarrier

\section*{Acknowledgments}
The authors would like to thank Christoforos Panagiotis for interesting discussions around regular hyperbolic tessellations.

\bibliographystyle{abbrv}
\bibliography{refs}

\end{document}